\newtheorem{theorem}{Theorem}
\def\be{\begin{equation}}
\def\ee{\end{equation}}
\newtheorem{lemma}[theorem]{Lemma}
\newtheorem{proposition}[theorem]{Proposition}
\begin{document}

\author[H\"{u}seyin Acan]{H\"{u}seyin Acan}\address{School
  of Mathematical Sciences, Monash University, Melbourne, VIC 3800,
   Australia} 
\email{huseyin.acan@monash.edu.au} 

\author[Pawe{\l} Hitczenko]{Pawe{\l} Hitczenko${}^\ddagger$}
\thanks{$\ddagger$ The second author was partially supported by a 
Simons Foundation grant \#208766. His work was carried out during a visit at Monash
University in the first half of 2014. 
He would like to thank the members of the School of Mathematical
Sciences and Nick Wormald in particular for  hospitality  and 
support.}
\address{Department of Mathematics, Drexel University, Philadelphia, 
PA  19104, USA} 
\email{phitczenko@math.drexel.edu}

\title[Covariances of outdegrees in  plane recursive trees]{On the covariances of outdegrees in random plane recursive trees}

\keywords{asymptotic normality, covariance matrix, binomial identities}

\maketitle

%
%
%
%
%
%
%
%
%
%

In 2005 Janson \cite{SJ_s}, extending earlier work of Mahmoud, Smythe,
and Szyma\'nski \cite{mss}, established the joint asymptotic normality of the 
outdegrees of a random plane recursive tree  (we refer to \cite{SJ_s} for  references, 
discussion and statements, and to \cite{SJ_l} for a much wider
context). 
 In particular, he gave the following formula for the entries of the
limiting covariance matrix  \cite[Theorem~1.3]{SJ_s}:  
\begin{equation}\label{sj_sig}\tilde\sigma_{ij}=2\sum_{k=0}^i\sum_{l=0}^j\frac{(-1)^{k+l}}{k+l+4}{i\choose
k}{j\choose l}\left(\frac{2(k+l+4)!}{(k+3)!(l+3)!}-1-\frac{(k+1)(l+1)}{(k+3)(l+3)}\right).\end{equation}
Since this formula is not very convenient to work with (in particular the behavior
of $\tilde\sigma_{ij}$ as $i$ and/or $j$ grow to infinity is not
immediately clear),  we found it worthwhile to point out that it may be
considerably simplified. Throughout, $(x)_m=x(x-1)\dots(x-(m-1))$ denotes the falling factorial.
\begin{proposition}\label{prop1} 
For all integers $i\ge0$, $j\ge 0$ we have
\begin{align*}\tilde\sigma_{ij}&= \frac{16}{(i+3)_3(j+3)_3}-\frac{24}{(i+j+4)_4},\mbox{\ if $i\ne
     j$;}\\
\tilde\sigma_{jj}&=\frac4{(j+3)_3}+\frac{16}{(j+3)_3^2}-\frac{24}{(2j+4)_4}.\end{align*}
\end{proposition}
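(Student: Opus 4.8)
The plan is to break the double sum in \eqref{sj_sig} into the three pieces coming from the three summands inside the parentheses, writing $\tilde\sigma_{ij}=S_A+S_B+S_C$ with
\begin{align*}
S_A&=4\sum_{k=0}^{i}\sum_{l=0}^{j}(-1)^{k+l}\binom{i}{k}\binom{j}{l}\frac{(k+l+3)!}{(k+3)!\,(l+3)!},\\
S_B&=-2\sum_{k=0}^{i}\sum_{l=0}^{j}(-1)^{k+l}\binom{i}{k}\binom{j}{l}\frac{1}{k+l+4},\\
S_C&=-2\sum_{k=0}^{i}\sum_{l=0}^{j}(-1)^{k+l}\binom{i}{k}\binom{j}{l}\frac{(k+1)(l+1)}{(k+3)(l+3)(k+l+4)},
\end{align*}
where in $S_A$ the prefactor $1/(k+l+4)$ has been used to reduce $2(k+l+4)!$ to $2(k+l+3)!$. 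The whole argument then rests on two elementary tools: the Beta-type identity $\sum_{k=0}^{i}(-1)^k\binom{i}{k}\frac{1}{k+a}=\int_0^1 x^{a-1}(1-x)^i\,dx=\frac{(a-1)!\,i!}{(a+i)!}$, and the standard fact that $\sum_{l=0}^{j}(-1)^l\binom{j}{l}P(l)$, being $(-1)^j$ times the $j$-th forward difference of $P$ at $0$, vanishes when $\deg P<j$ and equals $(-1)^j\,j!$ when $P$ is monic of degree exactly $j$.

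First I would dispatch $S_B$ by inserting $\frac{1}{k+l+4}=\int_0^1 x^{k+l+3}\,dx$ and pushing the two sums through, using $\sum_k(-1)^k\binom{i}{k}x^k=(1-x)^i$ and likewise in $l$, so that $S_B=-2\int_0^1 x^3(1-x)^{i+j}\,dx=-12\,(i+j)!/(i+j+4)!=-12/(i+j+4)_4$. For $S_C$ the crucial observation is the partial-fraction identity
\[
\frac{(k+1)(l+1)}{(k+3)(l+3)(k+l+4)}=\frac{1}{k+l+4}-\frac{2}{(k+3)(l+3)},
\]
which follows at once from $(k+1)(l+1)=(k+3)(l+3)-2(k+l+4)$. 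Substituting it turns $S_C$ into a copy of $S_B$ plus $4\bigl(\sum_{k}(-1)^k\binom{i}{k}\frac{1}{k+3}\bigr)\bigl(\sum_{l}(-1)^l\binom{j}{l}\frac{1}{l+3}\bigr)$; each single sum is the Beta identity with $a=3$, equal to $2/(i+3)_3$ and $2/(j+3)_3$ respectively, so $S_C=\frac{16}{(i+3)_3(j+3)_3}+S_B=\frac{16}{(i+3)_3(j+3)_3}-\frac{12}{(i+j+4)_4}$.

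The remaining, and most entertaining, step is $S_A$, which I would handle by summing over one index at a time: writing $S_A=4\sum_{k}(-1)^k\binom{i}{k}\frac{1}{(k+3)!}\bigl(\sum_{l}(-1)^l\binom{j}{l}\frac{(k+l+3)!}{(l+3)!}\bigr)$ and noting that, for fixed $k$, the factor $\frac{(k+l+3)!}{(l+3)!}=(l+4)(l+5)\cdots(l+k+3)$ is a monic polynomial in $l$ of degree $k$. By the finite-difference fact the inner sum is $0$ when $k<j$ and $(-1)^j\,j!$ when $k=j$. Hence if $i<j$ every term of $S_A$ vanishes and $S_A=0$; the case $i>j$ is the same by the symmetry $S_A(i,j)=S_A(j,i)$. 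If $i=j$, only $k=j$ contributes and $S_A=4\cdot\frac{1}{(j+3)!}\cdot j!=\frac{4}{(j+3)_3}$.

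Assembling the three evaluations, $\tilde\sigma_{ij}=S_A+S_B+S_C=S_A+\frac{16}{(i+3)_3(j+3)_3}-\frac{24}{(i+j+4)_4}$, which is precisely the asserted formula: for $i\neq j$ one has $S_A=0$, while for $i=j$ the extra term $\frac{4}{(j+3)_3}$ appears and $(i+j+4)_4=(2j+4)_4$, $(i+3)_3(j+3)_3=(j+3)_3^2$. The only step that genuinely requires care is spotting the partial-fraction identity behind the collapse of $S_C$; after that, $S_B$ and the single sums are routine Beta integrals and $S_A$ is a short finite-difference computation, so I do not anticipate any real obstacle beyond keeping the bookkeeping straight.
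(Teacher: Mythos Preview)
Your argument is correct. The overall strategy---isolate the ``$(k+l+4)!/((k+3)!(l+3)!)$'' piece (your $S_A$, the paper's \eqref{1stpart}) and show it vanishes unless $i=j$ via a finite-difference/Lemma~\ref{lem1}-type identity, then evaluate the remainder using the ``Beta'' identity $\sum_{k}(-1)^k\binom{i}{k}\frac{1}{k+a}=\frac{(a-1)!\,i!}{(i+a)!}$---is the same as the paper's. Where you genuinely streamline the argument is in handling the remainder. The paper combines the two residual terms via $1+\frac{(k+1)(l+1)}{(k+3)(l+3)}=\frac{2\bigl((k+1)(l+1)+(k+l+4)\bigr)}{(k+3)(l+3)}$ and then needs a \emph{second} partial-fraction step in one variable, $\frac{l+1}{(l+3)(k+l+4)}=\frac{k+3}{k+1}\cdot\frac1{k+l+4}-\frac2{(k+1)(l+3)}$, followed by two more applications of Lemma~\ref{lem2}. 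Your symmetric observation $(k+1)(l+1)=(k+3)(l+3)-2(k+l+4)$ collapses $S_C$ into a copy of $S_B$ plus a fully separable product in a single move, so the whole computation reduces to one Beta integral in $k+l$ and one factored Beta integral. Your Beta-integral derivation is also a pleasant replacement for the inductive proof of Lemma~\ref{lem2}. In short: same skeleton and same basic lemmas, but your partial-fraction identity is cleaner and shortens the bookkeeping.
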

For the proof 
we will need two identities involving binomial coefficients that we
present in the following two lemmas.
\begin{lemma}\label{lem1}
For all integers $k\ge 0$, $a\ge0$, and $j\ge k$:
\[
\sum_{l=0}^j(-1)^l{j\choose l}{k+l+a\choose
l+a}=\left\{\begin{array}{rr}0,&\mbox{ if $j>k$};\\ (-1)^j;&\mbox{\
   if $j=k$}.\end{array}\right.\]
\end{lemma}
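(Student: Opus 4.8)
The plan is to regard the inner term as a polynomial in $l$ and then invoke the standard fact that an alternating binomial sum annihilates polynomials of degree below its length. First I would write $\binom{k+l+a}{l+a}=\binom{k+l+a}{k}=\frac{1}{k!}\prod_{i=1}^{k}(l+a+i)$, which exhibits $P(l):=\binom{k+l+a}{l+a}$ as a polynomial in $l$ of degree exactly $k$ with leading coefficient $1/k!$; note that $a$ enters only as a shift of the variable and is otherwise irrelevant. The quantity to be evaluated is thus $\sum_{l=0}^{j}(-1)^{l}\binom{j}{l}P(l)$, an alternating binomial transform applied to a polynomial of degree $k\le j$.

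Next I would record the elementary identity, valid for $0\le m\le j$,
\[
\sum_{l=0}^{j}(-1)^{l}\binom{j}{l}\binom{l}{m}=
\begin{cases}(-1)^{j},& m=j,\\ 0,& m<j,\end{cases}
\]
which follows from the subset-of-a-subset relation $\binom{j}{l}\binom{l}{m}=\binom{j}{m}\binom{j-m}{l-m}$ together with the binomial theorem: after the substitution $t=l-m$ the sum collapses to $(-1)^{m}\binom{j}{m}(1-1)^{j-m}$, which is $0$ unless $j=m$ and equals $(-1)^{j}$ when $j=m$. Now expand $P$ in the Newton basis, $P(l)=\sum_{m=0}^{k}c_{m}\binom{l}{m}$; comparing coefficients of $l^{k}$ forces $c_{k}=k!\cdot\frac{1}{k!}=1$. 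Applying the identity term by term then shows that $\sum_{l=0}^{j}(-1)^{l}\binom{j}{l}P(l)$ vanishes when $j>k$ (every $m$ in the range $0\le m\le k$ satisfies $m<j$) and equals $(-1)^{j}c_{k}=(-1)^{j}$ when $j=k$ (only the $m=k$ summand survives), which is exactly the assertion of the lemma.

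The only step needing genuine care is the bookkeeping of the top Newton coefficient: one must verify $c_{k}=1$ — equivalently, that $P$ is $1/k!$ times a monic polynomial of degree $k$ — so that the $j=k$ case comes out as precisely $(-1)^{j}$ rather than some constant multiple of it; the rest is routine. A more computational alternative would be induction on $k$ via Pascal's rule in the form $\binom{k+l+a}{l+a}=\binom{(k-1)+l+a}{l+a}+\binom{k+l+(a-1)}{l+(a-1)}$: for $j\ge k$ the first summand contributes $0$ by the inductive hypothesis, so the sum is unchanged when $a$ is decreased by $1$, reducing matters to evaluating $\sum_{l=0}^{j}(-1)^{l}\binom{j}{l}\binom{k+l}{l}$. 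I would nonetheless prefer the finite-difference argument above, as it is shorter and makes the role of the hypothesis $j\ge k$ entirely transparent.
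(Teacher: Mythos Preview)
Your proof is correct and takes a genuinely different route from the paper's. The paper argues by induction on $k$: it splits $\binom{k+1+l+a}{l+a}$ as $\frac{k+1+a}{k+1}\binom{k+l+a}{l+a}+\frac{l}{k+1}\cdot\frac{(k+l+a)!}{k!(l+a)!}$, kills the first piece with the inductive hypothesis, and rewrites the second piece (after shifting the index) as a sum of the same shape with $j-1$ and $a+1$. Your argument instead recognizes $\binom{k+l+a}{l+a}=\binom{k+l+a}{k}$ as a polynomial in $l$ of degree $k$ with leading coefficient $1/k!$, expands it in the Newton basis $\binom{l}{m}$, and applies the basic identity $\sum_{l}(-1)^{l}\binom{j}{l}\binom{l}{m}=(-1)^{j}\delta_{j,m}$ term by term. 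This is the $j$-th finite-difference operator acting on a degree-$k$ polynomial, so the vanishing for $j>k$ and the value $(-1)^{j}$ for $j=k$ fall out at once from $c_{k}=1$, which you verify correctly. Your approach is shorter and more conceptual, making the role of the hypothesis $j\ge k$ and the irrelevance of $a$ immediately transparent; the paper's induction, while more hands-on, requires no structural insight beyond the base case and one algebraic rewriting. The alternative induction you sketch at the end (via Pascal's rule, reducing $a$) is closer in spirit to the paper's, though the paper increments $a$ rather than decrementing it; your sketch as written still leaves the $a=0$ evaluation open, so it is good that you present it only as a secondary option.
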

\begin{proof} This is a special case of formula (5.24) in
  \cite{GKP} as we have found thanks to the  encouragement by one of the
  referees to search for a source in the literature. It corresponds to
  $m=0$ and $s=n+a$ in the 
  notation used in \cite{GKP}. However, to keep this letter self--contained we supply a short
  proof. We proceed  by induction over $k$ for all $a$ and $j\ge k$. If
$k=0$ the equality holds for all $a\ge0$ since its left--hand side is $(1-1)^j$ if $j>0$ and $1$ if
$j=0$. Assume it holds for non-negative integers up to
$k$ and all values of $a$ and $j\ge k$. Let $a\ge0$ be any
integer. For $j\ge k+1$
\begin{align*}&\sum_{l=0}^j(-1)^l{j\choose l}{k+1+l+a\choose
l+a}=\frac{k+1+a}{k+1}\sum_{l=0}^j(-1)^l{j\choose l}{k+l+a\choose
l+a}\\&\quad+\sum_{l=0}^j(-1)^l\frac{j!}{l!(j-l)!}\frac{l(k+l+a)!}{(k+1)!(l+a)!}.
\end{align*}
The first sum is zero by the inductive hypothesis. We cancel the $l$'s
in the second sum and write it as
\[
\sum_{l=1}^{j} (-1)^l\frac{j!}{(l-1)!(j-l)!}\frac{(k+l-1+a+1)!}{(k+1)k!(l-1+a+1)!}
=-\frac j{k+1}\sum_{l=0}^{j-1}(-1)^l {j-1\choose l}{k+l+a+1\choose l+a+1}.
\]
By the inductive hypothesis (applied to  $k$, $a+1$, and $j-1$) this sum
is zero if $j-1>k$ and is $(-1)^{j-1}$ if $j-1=k$. This proves that
the original expression is zero if $j>k+1$ and is $(-1)^j$ if $j=k+1$
thus completing the induction.
\end{proof}
\begin{lemma}\label{lem2} For all  integers $j\ge0$, $i\ge0$, and $a\ge1$ we have 
\[\sum_{l=0}^j\frac{(-1)^l}{(l+a){l+a+i\choose
     i}}{j\choose l}=\frac1{a{i+j+a\choose
     a}}=\frac{(a-1)!}{(i+j+a)_a}.\]
\end{lemma}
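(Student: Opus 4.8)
The plan is to recognize each summand as (equal to) a Beta integral, which converts the alternating sum into an instance of the binomial theorem followed by a single elementary integration.

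First I would rewrite the reciprocal factor. Since $(l+a)\binom{l+a+i}{i}=(l+a+i)!/\big(i!\,(l+a-1)!\big)$ for $a\ge1$, we have
\[
\frac{1}{(l+a)\binom{l+a+i}{i}}=\frac{i!\,(l+a-1)!}{(l+a+i)!}=B(l+a,\,i+1)=\int_0^1 x^{l+a-1}(1-x)^i\,dx ,
\]
where $B$ denotes the Euler Beta function; this holds for every $l\ge0$ because $a\ge1$ forces $l+a-1\ge0$.

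Next I would substitute this representation into the left-hand side of the lemma and interchange the (finite) sum with the integral, obtaining $\int_0^1 x^{a-1}(1-x)^i\big(\sum_{l=0}^j\binom{j}{l}(-x)^l\big)\,dx$. By the binomial theorem the bracketed sum is $(1-x)^j$, so the whole expression equals $\int_0^1 x^{a-1}(1-x)^{i+j}\,dx=B(a,i+j+1)=(a-1)!\,(i+j)!/(i+j+a)!$. Finally $(i+j+a)!/(i+j)!=(i+j+a)_a$, so this is $(a-1)!/(i+j+a)_a$, which in turn equals $1/\big(a\binom{i+j+a}{a}\big)$; both stated forms of the answer follow.

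I do not anticipate a real obstacle: the only points needing care are the factorial bookkeeping in the first identity (and checking it remains valid at $l=0$ when $a\ge1$) and the fact that all Beta-function arguments here are positive integers, so the evaluation $B(p,q)=(p-1)!(q-1)!/(p+q-1)!$ applies. As an alternative, purely algebraic route one can induct on $j$: splitting $\binom{j+1}{l}=\binom{j}{l}+\binom{j}{l-1}$ and shifting the index $l\mapsto l+1$ in the second piece (which replaces $a$ by $a+1$ and invokes the inductive hypothesis once more) reduces the inductive step to the elementary identity $(a-1)!/(N)_a-a!/\big((N+1)(N)_a\big)=(a-1)!(N+1-a)/\big((N+1)(N)_a\big)$ with $N=i+j+a$. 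I would keep the integral argument as the main proof, since it is shorter and more transparent.
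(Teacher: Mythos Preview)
Your proof is correct, and it takes a genuinely different route from the paper's own argument. The paper proves the lemma by induction on $j$: it splits $\binom{j+1}{l}=\binom{j}{l}+\binom{j}{l-1}$, shifts the index in the second sum so that $a$ becomes $a+1$, applies the inductive hypothesis twice (once with $a$, once with $a+1$), and then verifies the elementary identity
\[
\frac{1}{a\binom{i+j+a}{a}}-\frac{1}{(a+1)\binom{i+j+a+1}{a+1}}=\frac{1}{a\binom{i+j+1+a}{a}}.
\]
This is precisely the alternative route you sketch in your final paragraph. Your main argument instead recognizes $1/\big((l+a)\binom{l+a+i}{i}\big)$ as the Beta integral $B(l+a,i+1)=\int_0^1 x^{l+a-1}(1-x)^i\,dx$, which turns the alternating sum into $\int_0^1 x^{a-1}(1-x)^{i+j}\,dx$ via the binomial theorem. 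This is shorter, avoids the double use of the inductive hypothesis, and makes the appearance of $(i+j+a)_a$ in the answer completely transparent; the paper's induction, by contrast, is purely algebraic and requires no analysis. Both are valid; yours is arguably the cleaner presentation, and it is a nice touch that you also outline the inductive alternative that the paper actually uses.
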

\begin{proof} We use induction over $j\ge0$ for all $a\ge1$
 and $i\ge0$. (Alternatively $i$ can stay fixed throughout). When
 $j=0$ both sides  are $1/(a{a+i\choose i})$. Assume
 the statement 
holds for all integers up to $j$ and all $a\ge1$. We
 will prove that it holds for $j+1$ and all integers $a\ge1$. We have
\begin{align*}&\sum_{l=0}^{j+1}\frac{(-1)^l}{(l+a){l+a+i\choose
     i}}{j+1\choose l}=   
\sum_{l=0}^{j+1}\frac{(-1)^l}{(l+a){l+a+i\choose
    i}}\left\{{j\choose l}+{j\choose
    l-1}\right\}\\&\quad=
\sum_{l=0}^{j}\frac{(-1)^l}{(l+a){l+a+i\choose
    i}}{j\choose l}+
\sum_{l=1}^{j+1}\frac{(-1)^l}{(l+a){l+a+i\choose
   i}}{j\choose
   l-1}\\&\quad=
\frac1{a{i+j+a\choose a}}+
\sum_{l=1}^{j+1}\frac{(-1)^{l-1+1}}{(l-1+a+1){l-1+a+1+i\choose
  i}}{j\choose
  l-1}\\&\quad=
\frac1{a{i+j+a\choose a}}-
\sum_{l=0}^{j}\frac{(-1)^{l}}{(l+a+1){l+a+1+i\choose
  i}}{j\choose
  l}=\frac1{a{i+j+a\choose a}}-\frac1{(a+1){i+j+a+1\choose a+1}}
\\&\quad=\frac{(a-1)!(i+j)!}{(i+j+a)!}\left\{1-\frac
 a{i+j+a+1}\right\}=
\frac{(a-1)!(i+j+1)!}{(i+j+a+1)!}=\frac1{a{i+j+1+a\choose a}},
\end{align*}
where we have used the inductive hypothesis, first with $j$ and $a$
and then with $j$ and $a+1$. This proves Lemma~\ref{lem2}.
\end{proof}
\noindent{\bf Proof of Proposition~\ref{prop1}.} 
Assume without loss of generality that $0\le i\le j$.  We split the
right--hand side
of \eqref{sj_sig} as 
\begin{align}\label{1stpart}&4\sum_{k=0}^i\sum_{l=0}^j\frac{(-1)^{k+l}}{k+l+4}{i\choose
k}{j\choose
l}\frac{(k+l+4)!}{(k+3)!(l+3)!}
\\&\quad\label{2ndpart}
-2\sum_{k=0}^i\sum_{l=0}^j\frac{(-1)^{k+l}}{k+l+4}{i\choose
k}{j\choose
l}\left(1+\frac{(k+1)(l+1)}{(k+3)(l+3)}\right).\end{align}
We claim that \eqref{1stpart} is zero unless $i=j$ in which case
it is $4/(j+3)_3$. 
To see this note that
\[\frac{(k+l+4)!}{(k+l+4)(k+3)!(l+3)!}=\frac1{(k+3)_3}{k+l+3\choose
 l+3},\]
so that
\begin{align*}&
\sum_{k=0}^i\sum_{l=0}^j\frac{(-1)^{k+l}}{k+l+4}{i\choose
 k}{j\choose l}\frac{(k+l+4)!}{(k+3)!(l+3)!}
\\&\quad=\sum_{k=0}^i\frac{(-1)^k}{(k+3)_3}{i\choose
 k}\sum_{l=0}^j(-1)^l{j\choose l}{k+l+3\choose l+3}.\label{non-zero}\end{align*}
Since $k\le i$ and we  assumed  that $i\le j$, by Lemma~\ref{lem1}, the inner
sum is zero unless
$i=j$ and if that is the case only the term $k=i=j$ in the outer sum is non--zero
and it is
\[\frac{(-1)^j}{(j+3)_3}{j\choose j}\sum_{l=0}^j(-1)^l{j\choose
 l}{j+l+3\choose l+3}=\frac{(-1)^{2j}}{(j+3)_3}=\frac1{(j+3)_3},
\]
by Lemma~\ref{lem1}.
To handle \eqref{2ndpart} we write
\[1+\frac{(k+1)(l+1)}{(k+3)(l+3)}=2\frac{(k+1)(l+1)+(k+l+4)}{(k+3)(l+3)},\]
so that \eqref{2ndpart} is
\begin{align}
&-4
\sum_{k=0}^i(-1)^k\frac{k+1}{k+3}{i\choose
k}\sum_{l=0}^j(-1)^l\frac{l+1}{(l+3)(k+l+4)}{j\choose
l}\label{2ndline}
\\&\quad
\label{1stline}
-4\sum_{k=0}^i(-1)^k\frac1{k+3}{i\choose
 k}\sum_{l=0}^j(-1)^{l}\frac1{l+3}{j\choose l}.
\end{align}
By Lemma~\ref{lem2} (used with $a=3$ and $i=0$) 
\eqref{1stline} is
\[-4\frac2{(i+3)_3}\frac2{(j+3)_3}=-\frac{16}{(i+3)_3(j+3)_3}.
\]
To handle \eqref{2ndline} we first note that  
\[\sum_{l=0}^j(-1)^l\frac{l+1}{(l+3)(k+l+4)}{j\choose
l}=\frac{k+3}{(k+1)(k+4){k+j+4\choose j}}-\frac2{3(k+1){j+3\choose
j}}.\]
This follows from partial fraction decomposition 
\[\frac{l+1}{(l+3)(k+l+4)}=\frac{k+3}{k+1}\cdot\frac1{k+l+4}-\frac2{(k+1)(l+3)}
\] 
and 
\[
\sum_{l=0}^j\frac{(-1)^l}{k+l+4}{j\choose l} = \frac1{(k+4){k+j+4\choose j}},\quad
\sum_{l=0}^j\frac{(-1)^l}{l+3}{j\choose l} = \frac1{3{j+3\choose j}}, 
\]
which is Lemma~\ref{lem2} used twice, with $a=k+4$ and $i=0$ for the first equality, and with $a=3$ and $i=0$ for the second equality.  
Therefore,  \eqref{2ndline} is
\[-4\sum_{k=0}^i(-1)^k\frac1{(k+4){k+j+4\choose j}}{i\choose k}
+\frac{16}{(j+3)_3}\sum_{k=0}^i(-1)^k\frac1{k+3}{i\choose k}.\]
Applying Lemma~\ref{lem2} (with $a=4$ and general $i$) to the first term
and with $a=3$ and $i=0$ to the second term we find that
\eqref{2ndline} is 
\[-\frac{24}{(i+j+4)_4}+\frac{32}{(i+3)_3(j+3)_3}.\]
Hence, the combined  contribution of \eqref{2ndline} 
and \eqref{1stline} is
\[-\frac{16}{(i+3)_3(j+3)_3}+\frac{32}{(i+3)_3(j+3)_3}-\frac{24}{(i+j+4)_4}=\frac{16}{(i+3)_3(j+3)_3}-\frac{24}{(i+j+4)_4},\]
which completes the proof.

\bibliographystyle{plain}

\end{document}